\numberwithin{equation}{section}
\newcommand{\beq}{\begin{equation}}
\newcommand{\eeq}{\end{equation}}
\newcommand{\beqs}{\begin{eqnarray*}}
\newcommand{\eeqs}{\end{eqnarray*}}
\newcommand{\beqn}{\begin{eqnarray}}
\newcommand{\eeqn}{\end{eqnarray}}
\newcommand{\beqa}{\begin{array}}
\newcommand{\eeqa}{\end{array}}
\def\p{\partial }
\def\Om{\Omega}
\def\pom{\p  \Omega}
\def\bom{\overline  \Omega}
\newtheorem{Proposition}{Proposition}[section]
\newtheorem{Theorem}[Proposition]{Theorem}
\newtheorem{Lemma}[Proposition]{Lemma}
\newtheorem{Corollary}[Proposition]{Corollary}
\title  {Sharp boundary regularity for some degenerate-singular Monge-Amp\`ere   Equations  on $k$-convex domain}
\begin{document}

\address{Huaiyu Jian: Department of Mathematical sciences, Tsinghua University, Beijing 100084, China.}

\address{Xianduo Wang: Department of Mathematical sciences, Tsinghua University, Beijing 100084, China.}

\email{
hjian@tsinghua.edu.cn;  xd-wang18@mails.tsinghua.edu.cn}

\thanks{This work was supported by NSFC 12141103  }


\bibliographystyle{plain}

\maketitle

\baselineskip=15.8pt
\parskip=3pt

\centerline {\bf   Huaiyu Jian \ \ \ \ Xianduo Wang}

\centerline {   Department of Mathematical sciences, Tsinghua University, Beijing 100084, China}

\vskip20pt

\noindent {\bf Abstract}:
We introduce the concept of $k$-strictly  convexity to describe the accurate convexity of  convex domains some directions of  which boundary may be flat. Basing this accurate convexity we construct  sub-solutions the Dirichlet problem for some   degenerate-singular Monge-Amp\`ere type equations
and  prove the sharp  boundary estimates for  convex viscosity solutions of the problem.  As a result,
 we obtain the optimal global H\"older regularity of the convex viscosity  solutions.


 \vskip20pt
 \noindent{\bf Key Words:} Boundary estimate, Monge-Amp\`ere type equation,  anisotropic convexity, singular  and  degenerate  elliptic equation.
 \vskip20pt

\noindent {\bf AMS Mathematics Subject Classification}:    35J96, 35J60, 35J75, 35Q82.

\vskip20pt

\noindent {\bf  Running head:} Sharp regularity for    Monge-Amp\`ere   equation

\vskip20pt

\baselineskip=15.8pt
\parskip=3pt

\newpage

\centerline {\bf  Sharp boundary regularity for some }
\centerline {\bf degenerate-singular Monge-Amp\`ere   Equations  on $k$-convex domain  }

 \vskip10pt

\centerline {\bf   Huaiyu Jian \ \ \ \ Xianduo Wang }

\maketitle

\baselineskip=15.8pt
\parskip=3.0pt

\section {Introduction}
Affine hyperbolic sphere   is  a well-known important model in affine geometry \cite {[Ca2], [CY], [CY2], [JW]} as well as a fundamental model in
  affine sphere relativity \cite{[M]}. It is determined by the Legendre transform of the solution to
the  following Drichlet problem of Monge-Amp\`ere equation:
 \beq
 \begin{split} \label {1.1}
\det  D^2 u&=  \frac{1}{|u|^{n+2}}\ \  \text{in}\ \Om,\\
   u&=0\ \ \text{on}\ \pom,
\end{split}
\eeq
where $\Om$ is a convex domain in $R^n$ $(n\geq 2).$   Also, if $u$ is a solution to problem (1.1),
  $(-u)^{-1}\sum u_{x_ix_j} dx_idx_j$   gives the Hilbert metric (Poincare metric) in the convex domain $\Om$ \cite {[LN]}.

  When $\Om$ is a bounded uniformly convex $C^2$-domain, the existence and uniqueness of
solutions  to \eqref{1.1}  in the space  $C^\infty(\Om)\cap C(\bom)$
was obtained in \cite {[CY]}.  In fact, the same result was proved in  \cite {[CY]} for general equation
$\det  D^2 u=  f(x, u) $ for  general $f$ satisfying the following (1.3) and (1.4) for $\alpha>0$ and $\gamma =0$.
If $\Om$ is an unbounded convex domain such that  $\pom$ is strictly convex at some point $x_{0}\in \pom$,
then  problem (1.1) admits  a convex solution $u\in C^{\infty}(\Om)\bigcap C (\overline{\Om})$. See \cite {[JL1]} for the details.
Besides, the following three examples were verified in  \cite {[JL1]}:
\beqs
 \begin{split}
&(i)\;  \text{If}\;  \Om=B_{1}(0), \text{then the unique solution to (1.1) is } u(x)=-\sqrt{1-|x|^{2}};\\
&(ii)\; \text{ If }\; \Om=B_{1}^{n-1}(0)\times R^{+}, \; \text{then}\;
 u (x)=-(n+1)^{\frac{1}{2}} n^{-\frac{n}{2(n+1)}}x_{n}^{\frac{1}{n+1}}(1-|x'|^{2})^{\frac{n}{2(n+1)}}\\
 & \text { is a solution to (1.1)};\\
 &(iii)\; \text{If }\; \Om=\{(x',x_{n})\in R^{n}:x_{n}\geq \sqrt{\frac{n^{n}}{(n+1)^{n+1}}}|x'|\}, \ \  \text{then}\\
& \ \ \ u (x)=-[\frac {(n+1)^{n+1}}{n^n}( x_{n} )^{2}-|x'|^{2}]^{\frac{n}{2(n+1)}}\; \text{  is a solution to (1.1).}\\
 \end{split}
\eeqs
Here and below, we use notation   $ x=(x', x_n), \; x'=(x_1, \cdots, x_{n-1}).$
 Obviously, examples (i)-(iii) shows that the boundary behaviour of solutions to problem (1.1) depends on the convexity of the domains $\Om$.

 In this paper we first introduce the concept of {\sl $k$-strictly   convexity }
 to describe the convexity of the domains accurately.  Then we will prove that  the same boundary behaviour as
 examples (i)-(iii)
 happens to a class of  Monge-Amp\`ere type equations which may be singular and degenerate on the boundary.

 \noindent{\bf  Definition 1.1}. {\sl Suppose $\Om$ is a bounded convex open domain in $R^n$, $x_0\in \pom$, $1\le k\leq n-1$ is positive
 integers,  and $a_i\geq 1$  for $i=1, 2, \cdots , k$.
We say that $\Omega$ satisfies  $ k$-strictly  convex condition with $(a_{1},..., a_{k})$ at $x_{0}$   if
there exist  positive constants $ \eta_{1},..., \eta_{k} $, after suitable coordinate translation and rotation transforms, such that
$$x_{0}=\textbf{0}  \ \   \text{and} \ \   \Om\subseteq\{x=(x_1, \cdots, x_k, \cdots, x_n)\in R^{n}|x_{k+1}>\eta_{1}|x_{1}|^{a_{1}}+...+\eta_k|x_{k}|^{a_k}\}.$$
If $\Omega$ satisfies  $ k$-strictly  convex condition  with $(a_{1},..., a_{k})$ at every $x_{0}\in \pom$ with the uniformly positive constants $ \eta_{1},..., \eta_{k}$, we say that $\Omega $
is  $k$-strictly convex domain  with $(a_{1},..., a_{k})$. }

  The numbers $a_{1},..., a_k$     describe the convexity   at $x_{0}$   exactly along each directions.
   The less are $a_i$, the more
   convex is $\Omega$ along the $x_i$-direction.  If $k<n-1$, the domain is flat near $x_{0}$ along $x_{k+2}, \cdots, x_{n}$-directions,
   which may be viewed as $n-1$-strictly convex with  $a_{k+1}, \cdots, a_{n-1}=+\infty$. Note that since $ x_{0}=\textbf{0} $ and $\eta_i$ can be taken to be large enough,
   it is sufficient to require that $|(x_1, \cdots, x_{k})|< 1$ in the above Definition.  A domain part of which boundary is flat
   may be viewed either as $ 0$-strictly   convex domain, or $k$-strictly convex with  $a_{1}, \cdots, a_{k}=+\infty$.

 Denote   $d_{x}=dist(x, \pom)$, and   consider  the Dirichelet problem for the Monge-Amp\`ere type equation
\begin{equation}
\begin{split}\label{1.2}
\det  D^2 u&=  F(x, u, \nabla u)\ \  \text{in}\ \Om,\\
   u &=0\ \ \text{on}\ \pom,
\end{split}
 \end{equation}
where the known function $F\in C (\Om\times(-\infty,0)\times  R^n)$
satisfies
\begin{equation}\label{1.3}
 \text{    for any} \; (x,  q) \in \Om\times  R^n,
F(x, z, q) \text{ is
non-decreasing in}\ z \in (-\infty,0),
 \end{equation}  and  there are constants $\alpha, \beta ,  \gamma , A $ and $ a_i \; (1\leq i\leq k)$  such that
\begin{equation}\label{1.4}
\begin{split}
&  a_i\geq 1 \; (1\leq i\leq k),  \; A>0, \; \beta\geq (n+1), \;\\ 
&0<F(x, z, q)\leq A d_{x}^{\beta-(n+1)}|z|^{-\alpha}(1+q^{2})^{\frac{\gamma}{2}}, \ \ \ \forall\; (x, z, q) \in \Om\times(-\infty,0)\times R^n , \\
&  0< \bar a +\beta-n-\gamma+1<n+\alpha-\gamma , \ \  \bar a:=\sum_{i=1}^k\frac{2}{a_{i}} .
 \end{split}
\end{equation}

To utilizing the $k$-strictly   convexity  of the domain, we need additional assumption:
\begin{equation} \label{1.5}
  F(x, u, \nabla u)  \text{ is invariant under coordinate translation and rotation transforms}.
 \end{equation}
 Obviously, (1.5) is satisfied if $F(x, z, p)=f(x,z, |p|)$ for some function $f$ defined in $\Om\times(-\infty,0)\times[0, \infty)$.

The  main result of this paper is
	\begin{Theorem}\label{1.1}
Supposed that $\Om$  is a  bounded convex domain in $R^n$, $x_{0}\in \Om$, and $\Om$ satisfies  $k$-strictly  convex condition  with $(a_{1},..., a_{k})$ at $x_{0}$.
If $F$ satisfies (1.3)-(1.5) and  $u\in C(\overline \Om)$ is a  convex viscosity solution to problem (1.2), then there exists a positive constant $C$, depending only on  $a_{1},..., a_{k}, n, A, \alpha, \beta, \gamma$ and the
$\eta_1, \cdots, \eta_{k}$ in the $k$-strictly  convex condition at $x_{0}$,  such that
\begin{equation}\label{1.6}
|u( x)|\leq C (d_x)^{\mu} \ \ \text{for all}\;  x\in \Om \; \text{such that }\; d_x=|x-x_0|,
\end{equation} where
\begin{equation}\label{1.7}
 \mu=\frac{\bar a+\beta-n-\gamma+1}{n+\alpha-\gamma}.
 \end{equation}
 \end{Theorem}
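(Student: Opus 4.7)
The plan is to establish \eqref{1.6} by constructing an explicit convex subsolution $v$ in a neighbourhood $D$ of $x_0$ and applying the comparison principle: since $\det D^2 u=F\ge 0$ and $u=0$ on $\partial\Omega$, if $v$ is convex with $\det D^2 v\ge F(y,v,\nabla v)$ in $D$ and $v\le u$ on $\partial D$, then $v\le u$ in $D$ and $|u(x)|\le|v(x)|$. By \eqref{1.5} and Definition~1.1 I first translate and rotate so that $x_0=\mathbf{0}$ and $\Omega\cap B_r(\mathbf{0})\subset\{y:y_{k+1}>\sum_{i=1}^k\eta_i|y_i|^{a_i}\}$ for some small $r>0$. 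The exponent $\mu$ in \eqref{1.7} is forced by an anisotropic scaling: under the dilation $y_i\mapsto\lambda^{1/a_i}y_i$ ($1\le i\le k$), $y_{k+1}\mapsto\lambda y_{k+1}$, $y_j\mapsto y_j$ ($j>k+1$), which preserves the model boundary, together with $u\mapsto\lambda^\mu u$, the left-hand side of \eqref{1.2} scales by $\lambda^{n\mu-\bar a-2}$ and the right-hand bound of \eqref{1.4} on $F$ by $\lambda^{\beta-n-1-\alpha\mu+(\mu-1)\gamma}$; equating the exponents recovers exactly \eqref{1.7}.

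Motivated by this scaling and by the explicit solutions in examples (i)--(iii), I try a convex barrier of the form
\[
v(y)=-M\,\Phi(y)^\mu,\qquad \Phi(y)=y_{k+1}-\sum_{i=1}^k\eta_i|y_i|^{a_i}-\delta\!\!\sum_{j=k+2}^{n}\!y_j^2,
\]
possibly with an additional $\delta|y_i|^2$ term in the first $k$ coordinates to regularise the axis, on $D=\{\Phi>0\}\cap B_r(\mathbf{0})$. The Hessian decomposes as a rank-one PSD piece $M\mu(1-\mu)\Phi^{\mu-2}\nabla\Phi\otimes\nabla\Phi$ plus a PSD diagonal piece $-M\mu\Phi^{\mu-1}D^2\Phi$ whose only zero eigendirection is $y_{k+1}$. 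The matrix determinant lemma combines these, and because of the calibration of $\mu$ the powers of $\Phi$ cancel on both sides of the required inequality $\det D^2v\ge F(y,v,\nabla v)$; the residual estimate involves only constants and can be enforced by choosing $\delta$ small and $M$ large depending on $a_i,\eta_i,n,A,\alpha,\beta,\gamma$. On $\partial D\cap\partial\Omega$ the construction gives $v\le 0=u$, and on the interior part $\partial D\cap\Omega$ the continuity of $u$ lets us enforce $v\le u$ after further enlarging $M$. The comparison principle for convex viscosity solutions of Monge-Amp\`ere then delivers $v\le u$ in $D$, and evaluating at a test point $x$ on the inner normal ray (where $\Phi(x)=x_{k+1}=d_x$) yields $|u(x)|\le M\,d_x^\mu$.

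The principal obstacle is ensuring the determinant inequality pointwise throughout $D$ rather than merely along the normal ray. Three sources of difficulty conspire: $y_{k+1}$ is a genuine zero eigendirection of $D^2\Phi$, so the lower bound on $\det D^2v$ rests entirely on the rank-one cross-term $\nabla\Phi\otimes\nabla\Phi$; the eigenvalue $|y_i|^{a_i-2}$ of $D^2\Phi$ degenerates on the axis when $a_i>2$ and vanishes identically when $a_i=1$, so the $\delta|y_i|^2$ regularisation is essential; and the Euclidean distance $d_y$ must be controlled by $\Phi$ at off-axis points, which likely forces $D$ to be a tube about the normal ray rather than a full ball intersection. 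The solution $-(C_0x_n^2-|x'|^2)^{n/(2(n+1))}$ of example~(iii) further suggests that for small $a_i$ a quadratic correction inside the bracket of $\Phi$ may be needed; keeping track of all these refinements while preserving the scaling exponent $\mu$ is the main technical work.
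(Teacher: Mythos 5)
Your overall strategy (normalize via \eqref{1.5}, construct an explicit convex subsolution, compare, evaluate on the normal ray) is the same as the paper's, and your anisotropic scaling heuristic correctly recovers $\mu$. But the specific barrier $v=-M\Phi^\mu$ with $\Phi=y_{k+1}-\sum_i\eta_i|y_i|^{a_i}-\delta\sum_j y_j^2$ (and a $\delta|y_i|^2$ regularisation in the first $k$ coordinates) does \emph{not} give the sharp exponent, and the gap is not a mere technicality. With that $\Phi$, $D^2v=M\mu(1-\mu)\Phi^{\mu-2}\nabla\Phi\otimes\nabla\Phi-M\mu\Phi^{\mu-1}D^2\Phi$ and the matrix determinant lemma gives
\[
\det D^2v \;=\; M^n\mu^n(1-\mu)\,\Phi^{\,n\mu-n-1}\,(2\delta)^{n-k-1}\prod_{i=1}^k\bigl(\eta_i a_i(a_i-1)|y_i|^{a_i-2}+2\delta\bigr),
\]
and the best uniform lower bound on the product is $(2\delta)^k$. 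Hence your determinant lower bound scales like $\Phi^{\,n\mu-n-1}$, whereas your own scaling heuristic (and the paper's computation) requires $\Phi^{\,n\mu-\bar a-2}$. The two exponents coincide only when $\bar a=n-1$; when $\bar a>n-1$ (e.g.\ $k=n-1$, all $a_i$ close to $1$, which is allowed by (1.4)), $n\mu-n-1>n\mu-\bar a-2$ and the inequality $\det D^2v\ge F(y,v,\nabla v)$ fails as $\Phi\to0$ with the claimed $\mu$. Equivalently: the $\delta$-regularisation you rely on to make $D^2\Phi$ nondegenerate on the axis destroys the anisotropic homogeneity of $\Phi$, and that homogeneity is exactly what produces the $\bar a$ improvement.

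The paper's construction resolves this by \emph{not} taking a single power of a single defining function. It sets $W=M(H+G)$, where $H=\sum_{i=1}^k H_i$, each $H_i=-\bigl[(x_{k+1}/\varepsilon)^{2/a_i}-x_i^2\bigr]^{1/b_i}$ with $b_i=\tfrac{2}{a_i}\cdot\tfrac{n+\alpha-\gamma}{\bar a+1+\beta-n-\gamma}$ (so $\mu=2/(a_ib_i)$), and $G=-(x_{k+1}/\varepsilon)^\mu\sqrt{\Lambda^2-\sum_{j>k+1}x_j^2}$ handles the flat directions. Two features are essential and absent from your proposal. First, the transverse variable enters each bracket \emph{quadratically} as $-x_i^2$ (not as $-|x_i|^{a_i}$), so $H_{x_ix_i}$ contains the term $\tfrac{2}{b_i}|H_i|^{1-b_i}$, bounded below by a positive multiple of $(x_{k+1}/\varepsilon)^{\mu(1-b_i)}$ uniformly, including on the axis; your $\Phi$ loses this because $\partial_{y_i}^2(|y_i|^{a_i})$ vanishes at $y_i=0$ when $a_i>2$ and identically when $a_i=1$. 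Second, the exponent $1/b_i$ is chosen per coordinate so that each $H_i$ is exactly homogeneous of degree $\mu$ under the dilation $x_i\mapsto\lambda^{1/a_i}x_i$, $x_{k+1}\mapsto\lambda x_{k+1}$; summing these and multiplying the diagonal Hessian entries reproduces the correct power $(x_{k+1}/\varepsilon)^{n\mu-\bar a-2}$, as in (3.20) and (3.27) of the paper. A single function $\Phi^\mu$, even with a quadratic correction inside the bracket, cannot do this because one bracket cannot be simultaneously homogeneous of different weights $2/a_i$ in the different $x_i$; you need a \emph{sum} of anisotropic power functions. So your intuition that ``a quadratic correction inside the bracket may be needed'' is in the right direction, but the correct form is a sum $\sum_i[\cdot]^{1/b_i}$ with per-coordinate defining functions and per-coordinate exponents, not a single $\Phi^\mu$.
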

We refer to Section 2 for the  viscosity solution of problem (1.2).
 The global H\"older regularity will follow directly from Theorem 1.1.

 \begin{Corollary} \label {1.2}
Supposed that $\Om \subset R^n$  is     $ k$-strictly   convex domain  with $(a_{1},..., a_{k})$.   If $F$ satisfies (1.3)-(1.5) and  $u\in C (\overline\Om)$ is a  convex viscosity solution to problem (1.2),
then $u\in C^{\mu}(\overline{\Omega})$ and  there exists a positive constant $C=C(\Om, n, A, \alpha, \beta, \gamma)$ such that $|u|_{C^{\mu}(\overline{\Omega})}\leq C$, where  $\mu$ is given again by (1.7).
 \end{Corollary}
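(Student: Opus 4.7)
The plan is to derive Corollary 1.2 directly from Theorem 1.1, by first establishing a uniform pointwise boundary estimate and then promoting it to a global H\"older bound via the convexity of $u$.

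First I would check that the constant $C$ in (1.6) may be taken independent of the boundary point $x_{0}$. Under the hypothesis of Corollary 1.2, $\Om$ is uniformly $k$-strictly convex with $(a_{1},\ldots,a_{k})$, so the constants $\eta_{1},\ldots,\eta_{k}$ of Definition 1.1 can be chosen the same at every $x_{0}\in\pom$. Since Theorem 1.1 asserts that $C$ depends on $x_{0}$ only through these $\eta_{i}$ (together with the structural data $a_{i},n,A,\alpha,\beta,\gamma$), the same $C$ works uniformly in $x_{0}$. For any $x\in\Om$, choosing $x_{0}\in\pom$ with $|x-x_{0}|=d_{x}$ and applying (1.6) gives
\[
|u(x)|\leq C\,d_{x}^{\mu},
\]
which in particular yields $\|u\|_{L^{\infty}(\Om)}\leq C(\mathrm{diam}\,\Om)^{\mu}$.

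Next I would upgrade this pointwise estimate into a H\"older bound, noting that $\mu\in(0,1)$ by the last inequality of (1.4). Fix $x,y\in\bom$ and set $\delta:=|x-y|$, relabeling so that $d_{x}\leq d_{y}$. If $\delta\geq d_{x}$, then $d_{y}\leq d_{x}+\delta\leq 2\delta$, so
\[
|u(x)-u(y)|\leq|u(x)|+|u(y)|\leq C(d_{x}^{\mu}+d_{y}^{\mu})\leq C'\delta^{\mu};
\]
this case subsumes any pair where $x$ or $y$ lies on $\pom$. If instead $\delta<d_{x}$, both points are interior and I would extend $[x,y]$ within $\Om$ to a chord $[p,q]$ with $p,q\in\pom$ and $p,x,y,q$ collinear in that order. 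Restricting $u$ to this chord yields a convex function of one variable vanishing at both endpoints, so the monotonicity of slopes of convex functions gives
\[
|u(x)-u(y)|\leq\delta\,\max\!\Bigl(\tfrac{|u(x)|}{|px|},\tfrac{|u(y)|}{|yq|}\Bigr)\leq C\,d_{x}^{\mu-1}\delta,
\]
using $|px|\geq d_{x}$, $|yq|\geq d_{y}$, and (since $\mu-1<0$) $d_{x}^{\mu-1}\geq d_{y}^{\mu-1}$. The assumption $\delta<d_{x}$ together with $\mu-1<0$ then gives $d_{x}^{\mu-1}\leq\delta^{\mu-1}$, hence $d_{x}^{\mu-1}\delta\leq\delta^{\mu}$.

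Combining the two cases yields $|u(x)-u(y)|\leq C''|x-y|^{\mu}$ on $\bom$, which together with the $L^{\infty}$ bound delivers $u\in C^{\mu}(\bom)$ with the asserted estimate. The only point of substance is the first step, namely extracting a constant $C$ in Theorem 1.1 that is uniform over $x_{0}\in\pom$ from the uniform $k$-strictly convex hypothesis; the convexity-along-chords argument that promotes a pointwise boundary decay into a global H\"older bound is a standard device for Monge--Amp\`ere problems with vanishing boundary data, so I do not anticipate additional difficulties there.
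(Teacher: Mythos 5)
Your argument is correct and follows essentially the same route as the paper: apply Theorem~1.1 (noting that the uniform $k$-strictly convex hypothesis makes the constant independent of the boundary point) to obtain $|u(y)|\le C\,d_y^{\mu}$ for all $y\in\Om$, and then convert this pointwise boundary decay into a global $C^{\mu}$ bound using the convexity of $u$ and $u|_{\pom}=0$. The only difference is that the paper invokes Lemma~2.3 (quoted from~\cite{[JL]}) for the second step, whereas you supply its short chord-slope proof inline.
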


 As we have said, a bounded convex domain (which boundary may contain a flat part) may be viewed as a  $ 1$-strictly exterior convex domain  with $a_{1}=\infty$.  Taking $k=1$ and letting $a_1\to \infty$ in (1.7), we
 see that the $\mu$ is turned to $\mu_0:= \frac{ \beta-n-\gamma+1}{n+\alpha-\gamma}.$  Then Corollary 1.2  should be
 \begin{Corollary} \label {1.3}
 Supposed that $\Om \subset R^n$  is       a bounded  convex domain.   If $F$ satisfies (1.3)-(1.5) and  $u\in C (\overline\Om)$ is a  convex viscosity solution to problem (1.2),
then $u\in C^{\mu_0}(\overline{\Omega})$ and  there exists a positive constant $C=C(\Om, n, A, \alpha, \beta, \gamma)$ such that $|u|_{C^{\mu_0}(\overline{\Omega})}\leq C$.
 \end{Corollary}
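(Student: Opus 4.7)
The plan is to reduce Corollary~1.3 to Theorem~1.1 by letting the parameter $a_1$ in the $1$-strictly convex condition tend to infinity. For any bounded convex $\Om$ and any boundary point $x_0\in\pom$, $\Om$ admits a supporting hyperplane at $x_0$, so after translation and rotation we may assume $x_0=\textbf{0}$ and $\Om\subseteq\{x_n>0\}$ in a neighborhood of the origin. This half-space inclusion is exactly the degenerate form of the condition $\Om\subseteq\{x_n>\eta_1|x_1|^{a_1}\}$ when $a_1\to\infty$, and in that same limit $\bar a=2/a_1\to 0$, so the exponent $\mu$ in (1.7) tends to $\mu_0$.

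My first step is to prove the pointwise boundary bound $|u(x)|\le Cd_x^{\mu_0}$ for $x\in\Om$ close to $x_0$ by a barrier argument mirroring the one behind Theorem~1.1, now in the degenerate regime $a_1=\infty$. A natural candidate, suggested by the shape of example~(iii), is the sub-solution
\[
w(x)=-M\bigl(x_n^{2}-\theta^{2}|x'|^{2}\bigr)^{\mu_0/2}
\]
on the thin cone $\{x_n>\theta|x'|\}\cap B_r$, with $M$ large and $\theta, r$ small. The inequality $0<\bar a+\beta-n-\gamma+1<n+\alpha-\gamma$ from (1.4) forces $\mu_0\in(0,1)$, so $w$ is strictly convex in this cone; the same determinant computation as in Theorem~1.1, combined with the upper bound on $F$ in (1.4), then shows that for a suitable choice of the constants $w$ is a sub-solution of (1.2) and satisfies $w\le u$ on the lateral boundary of the cone. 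By the comparison principle (using monotonicity of $F$ in $z$, condition (1.3)), $w\le u$ throughout the cone $\cap\,\Om$, so $|u(x)|\le Mx_n^{\mu_0}\le Cd_x^{\mu_0}$ for $x$ in a one-sided neighborhood of $x_0$. A covering of $\pom$ by finitely many such neighborhoods then gives a uniform constant $C=C(\Om,n,A,\alpha,\beta,\gamma)$, and the global H\"older bound $|u|_{C^{\mu_0}(\bom)}\le C$ is obtained from this boundary estimate and the convexity of $u$, exactly as Corollary~1.2 is deduced from Theorem~1.1.

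The main obstacle is the verification of the barrier in the degenerate regime. With $a_1=\infty$ the hypothesis yields no quantitative strictly convex ``push'' away from the tangent plane $\{x_n=0\}$, so the auxiliary cone parameter $\theta$ must be tuned precisely so that the $|x'|^2$ piece in $w$ supplies just enough curvature to keep $\det D^2w\ge F(x,w,\nabla w)$ without violating $w\le u$ on the lateral boundary. This balancing is exactly what forces the exponent $\mu_0$ from (1.7) and is the precise counterpart (in the limit $\bar a=0$) of the determinant computation carried out in the proof of Theorem~1.1.
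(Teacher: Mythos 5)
Your overall strategy—build a subsolution that decays like $x_n^{\mu_0}$ off a supporting hyperplane and compare—is in the right spirit, but the specific barrier you chose has a genuine flaw that the paper's construction is designed to avoid. The paper uses the \emph{cylinder-type} function
\[
W(x)=-M\,x_n^{\mu_0}\Bigl(N^2-\sum_{i=1}^{n-1}x_i^2\Bigr),
\]
defined on the infinite slab $\{x_n>0,\ |x'|<N\}$, with $N$ chosen so large that, after translating/rotating so $\Omega\subseteq\{x_n>0\}$, the whole of $\Omega$ sits inside that slab. Then $W\le 0=u$ on all of $\partial\Omega$, and Lemma~2.1 applies globally. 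This is the degenerate limit of example~(ii), not example~(iii).

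Your cone barrier $w(x)=-M\bigl(x_n^2-\theta^2|x'|^2\bigr)^{\mu_0/2}$ on $\{x_n>\theta|x'|\}\cap B_r$ cannot be compared with $u$ by Lemma~2.1 on that region. The comparison requires $w\le u$ on the \emph{entire} boundary of the comparison region, but on the lateral cone $\{x_n=\theta|x'|\}$ one has $w=0$, whereas $u\le 0$ in $\Omega$; so on the part of the lateral cone lying inside $\Omega$, $w\ge u$, which is exactly the wrong inequality. You assert ``$w\le u$ on the lateral boundary of the cone'' but that is false unless that lateral boundary lies on $\partial\Omega$, which would mean $\Omega\subseteq\{x_n>\theta|x'|\}$ near $x_0$—i.e.\ a $1$-strictly convex condition with $a_1=1$—and a general bounded convex domain (with possibly flat boundary near $x_0$) satisfies no such inclusion for any fixed $\theta>0$. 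Shrinking $\theta$ to accommodate a flatter boundary degenerates $w$ towards $-Mx_n^{\mu_0}$, whose Hessian determinant vanishes, so the barrier inequality $\det D^2w\ge F$ is also lost in that limit. The cylinder barrier sidesteps both problems: its region automatically contains $\Omega$, its zero set $\{x_n=0\}\cup\{|x'|=N\}$ only touches $\overline\Omega$ on the supporting hyperplane, and its strict convexity in $x'$ is supplied by the $N^2-|x'|^2$ factor rather than by any assumed cone containment.
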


Notice that  examples (i)-(iii)  shows  the estimate  (1.6)  is sharp and so Corollary 1.2 and   1.3 are   optimal.

Besides the geometric models included in  problem (1.1),  problem (1.2) also includes the well-known  prescribed Gauss curvature problem
(when $F=\eta(x, u)(1+|Du|^{2})^{\frac{n+2}{2}}$). Moreover,
the projection of $H$ on the plane $\{x_{n+1}=-1\}$, where $H$
 is the solution to  general  $L_p$-Minkowski problem, satisfying
 $$\det (\nabla^2 H+ HI)=\frac{\eta(x, \nabla H)}{H^p},$$  also reduces to  problem (1.2).
  See \cite{ [CW], [JLW],[JLZ],[Lut]} for the details.

It should be pointed out that problem (1.2) may be degenerate, singular,
or both degenerate and singular on the boundary since $u =0$ on $\partial \Omega$.    There are general existence result for the solution in space $C^{2}(\Omega)\cap C(\bar \Omega)$. For example, see
\cite{[CY], [CNS],[TU]} for convex domains with smooth (or $C^{1,1}$) boundary. These results may be extended to the case of  any bounded convex
domain, due to the a prior estimate obtained in Corollary 1.2 and   1.3.  Also,  there are a few results of global  H\"older regularity for problem (1.2) with particular $F $ \cite{[JL], [JLT],[Le], [U1]}, and many important papers on
 global regularity better than H\"older continuity for    Monge-Amp\`ere
equations under more strong assumption on $F$ and $\Omega$ \cite{[CNS], [F], [G],[LS], [Sa], [TW]}. But all of the results mentioned above ignore  the influence of anisotropic convexity of the domain on the boundary behaviour of the solutions.

Recently, we   introduced  the concept of  $n-1$-strictly  convexity for the domain in \cite{[JLL]} and  obtained the results of Theorems 1.1 only for the case $k=n-1$.    However, those results
can not be used to explain the exact behaviour of example (ii) and (iii), since the cylindrical surface and conical surface are $n-2$-strictly   convex.

This paper is organized as follows. In Section 2, we  derive a comparison principle for the viscosity solution to problem (1.2).
In Section 3 we  construct  smooth subsolutions  to problem (1.2), which, together the comparison principle,   proves Theorems 1.1  and Corollary 1.2 and 1.3.   As we see,
the arguments constructing  subsolution will be  delicate and
  technical.

\section {comparison principle for  viscosity solutions}

For convenience, we state the definition for viscosity solution to problem (1.2) and derive two comparison principles, which
should be known for specialists. One can see the   paper \cite{[Cr]} and the references therein  on viscous solution theory for general elliptic equations.

{\bf Definition 2.1}\; Let $\Omega \subset \mathbb{R}^n$ be an open  set,  $u\in C(\overline{\Omega})$ and $ F(x, z, p) $ be a non-negative function defined in $\Omega\times R\times R^n$.
{\sl We say that   $u$ is a   viscosity subsolution (supersolution) of the equation
\begin{equation}\label{2.1}\det D^2u=F(x, u, Du)\;\;   in \;\; \Omega
 \end{equation} if whenever convex $\phi \in C^2(\Omega) $
and $x_0\in \Omega$ are such that $(u-\phi)(x)\leq (\geq ) (u-\phi)(x_0)$ for all $x$ in a neighborhood of $x_0$, then we must have
$$\det D^2\phi (x_0)\geq (\leq )F(x_0, u(x_0), D\phi(x_0)).$$ }   {\sl If $u$ is both viscosity subsolution and viscosity supersolution, then $u$ is called a viscosity solution.}

It is obvious that a convex $C^2$-solution  of (2.1)  must be  convex viscosity solution.  Caffarelli   indicated that the (Alexandrove) generalized solution (see \cite {[F], [G]}) to the   equation  $\det D^2u=\eta(x)$ is equivalent to the viscosity solution if $\eta\in C(\Omega)$.  Hence, a $C^1$-(Alexandrove) generalized solution to (2.1) is   convex viscosity solution if $F\in C(\Om \times R\times R^n)$.

\begin{Lemma} 
		Suppose that $\Omega$ is a bounded convex domain in $ {R}^n$, $F(x, z, p)$ is a nonnegative function  satisfying
\begin{equation}\label{2.1}
 \text{ for any} \; (x,  q) \in \Om\times  R^n,
F(x, z, q) \text { is non-decreasing in
 }\; z \in (-\infty, \infty),
\end{equation}
  and $u \in   C(\overline{ \Omega})$  is  a viscosity solution to (2.1).
		
(i)\;  If $ v \in C^2(\Omega)\cap C(\overline{ \Omega})$ is a convex function in $\Omega$,  satisfying
	 \begin{equation}\label{2.3}
		  \det D^2 v >  F(x, v, Dv) \;   \text{in}\;  \Omega,  \;  \text{  and}\; \; u \ge v \; \text{on}\;  \partial \Omega,
		 \end{equation}
	  then   $u \ge v$ on $\overline{  \Omega}$.

(ii)\; If $ v \in C^2(\Omega)\cap C(\overline{ \Omega})$ is a convex function in $\Omega$,  satisfying
		\begin{equation}\label{2.4}
		  \det D^2 v < F(x, v, Dv) \;  \text{in}\;  \Omega,  \;  \text{  and}\; u \leq v \; \text{on}\;  \partial \Omega,
		 \end{equation}
		  then   $u \leq v$ on $\overline{ \Omega}$.
	\end{Lemma}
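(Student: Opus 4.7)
The plan is to prove both parts by the standard contradiction argument for the comparison principle, exploiting the strict inequality in the sub/supersolution condition on $v$ together with the monotonicity \eqref{2.1} of $F$ in $z$. No perturbation or doubling of variables is required because of the strict inequality.

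For part (i), I would suppose toward contradiction that the set $\{x\in\overline{\Omega}:u(x)<v(x)\}$ is nonempty. Since $u-v\geq 0$ on $\partial\Omega$ and $u-v\in C(\overline{\Omega})$, the function $u-v$ attains its (negative) minimum at some interior point $x_{0}\in\Omega$. Thus $u-v$, equivalently $u-\phi$ with $\phi:=v$, has a local minimum at $x_{0}$, and since $v\in C^{2}(\Omega)$ is convex, the function $\phi$ is an admissible test function in the viscosity supersolution definition for $u$. This gives
\[
\det D^{2}v(x_{0}) \;\leq\; F\bigl(x_{0},u(x_{0}),Dv(x_{0})\bigr).
\]
On the other hand, by \eqref{2.3},
\[
\det D^{2}v(x_{0}) \;>\; F\bigl(x_{0},v(x_{0}),Dv(x_{0})\bigr),
\]
and since $u(x_{0})<v(x_{0})$, the monotonicity \eqref{2.1} yields
\[
F\bigl(x_{0},u(x_{0}),Dv(x_{0})\bigr) \;\leq\; F\bigl(x_{0},v(x_{0}),Dv(x_{0})\bigr).
\]
Chaining these three inequalities produces the strict inequality $F(x_0,v(x_0),Dv(x_0)) < F(x_0,v(x_0),Dv(x_0))$, a contradiction.

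Part (ii) is entirely symmetric. Assume $\{u>v\}\ne\emptyset$; then $u-v$ attains its positive maximum at some interior $x_{0}$. Using $\phi=v$ (again convex and $C^{2}$) as a test function in the viscosity subsolution definition for $u$ gives $\det D^{2}v(x_{0})\geq F(x_{0},u(x_{0}),Dv(x_{0}))$, while \eqref{2.4} gives the reverse strict inequality with $v(x_0)$ in place of $u(x_0)$, and $u(x_{0})>v(x_{0})$ combined with \eqref{2.1} closes the loop to a contradiction.

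There is no real obstacle; the only points that require attention are (a) verifying that the minimum/maximum of $u-v$ is genuinely attained in the open set $\Omega$ (which follows from the continuity on the compact $\overline{\Omega}$ together with the sign of $u-v$ on $\partial\Omega$), and (b) checking that the convex $C^{2}$ function $v$ satisfies the convexity requirement imposed on test functions in Definition~2.1. Both are immediate from the hypotheses. The entire argument uses only the strict inequalities in \eqref{2.3}--\eqref{2.4} and the monotonicity assumption \eqref{2.1}; no regularity of $u$ beyond continuity is needed.
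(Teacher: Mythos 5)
Your proof is correct and follows essentially the same argument as the paper: locate the interior extremum of $u-v$, use $v$ itself as the convex $C^{2}$ test function in Definition~2.1, and chain the resulting viscosity inequality with the strict inequality \eqref{2.3} (resp.\ \eqref{2.4}) and the monotonicity \eqref{2.1} to reach a contradiction. The only cosmetic difference is that you spell out part (ii) explicitly, whereas the paper simply notes the symmetry and proves only (i).
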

	\begin{proof}
		Since the  proofs of (i) and (ii)  are similar, it is enough to prove (i).
		By the contradiction argument we assume that $\inf_{\Omega} (u - v) <0$, we use the fact that $u \ge v$ on $\partial \Omega$
to find   an  $x_0 \in \Omega$
such that $$u(x_0) - v(x_0) = \inf_{\Omega} (u - v)<0.$$   Since $v$ is convex, by the definition of viscosity solution we have
$$\det D^2 v(x_0) \le F(x_0, u(x_0), Dv(x_0)).$$
On the other hand, by the assumption (2.3) for $v$ and (2.2) for $F$, we have
		\[
		det D^2 v(x_0)  > F(x_0, v(x_0), Dv(x_0)) \geq  F(x_0, u(x_0), Dv(x_0)) ,
		\] which is impossible. So our assumption does not hold and it shows $u  \ge v$ on  $\overline{  \Omega}$.
	\end{proof}

Checking the above proof we see that if $F(x,z, p)$ is strictly increasing in $z$, the $>$ in (2.3) and the $<$ in (2.4)
 may be replaced by $\geq $ and $\leq $ respectively. In addition, we can exchange the equation
for $u$ and the inequalities for $v$. Consequently, we have

\begin{Lemma} 
		Suppose that  $\Omega$ is a bounded convex domain in $ {R}^n$, $F(x, z, p)$ is a nonnegative function  satisfying
\begin{equation}\label{2.5}
 \text{ for any} \; (x,  q) \in \Om\times  R^n,
F(x, z, q) \text { is strictly increasing in
 }\; z \in (-\infty, \infty),
\end{equation}
  and $u \in C(\Omega)\cap C(\overline{ \Omega})$  is a viscosity sub-solution to
		\[
		\det D^2 u \geq F(x, u, Du)  \text{in}\;  \Omega .
		\]
  If $ v \in C^2(\Omega)\cap C(\overline{ \Omega})$ is a convex function in $\Omega$,  satisfying
	 \begin{equation*}
		  \det D^2 v =  F(x, v, Dv) \;   \text{in}\;  \Omega, \; \text{and}\;  u \leq  v \; \text{on }\; \partial \Omega,
		 \end{equation*}
		 then   $u \leq v$ on $\overline{  \Omega}$.
\end{Lemma}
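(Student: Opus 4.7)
The plan is to adapt the contradiction argument of Lemma 2.1 by swapping the roles of $u$ and $v$, and to use the strict monotonicity in (2.5) of $F$ in $z$ to supply the strict gap that in Lemma 2.1 was furnished by the strict inequality in the PDE for $v$.

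First I will assume for contradiction that $\sup_{\overline{\Omega}}(u-v)>0$. Since $u,v\in C(\overline{\Omega})$ and $u\le v$ on $\partial\Omega$, the supremum will be attained at some interior point $x_0\in\Omega$, so $c:=(u-v)(x_0)>0$ and $(u-v)(x)\le c$ for every $x\in\overline{\Omega}$.

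Next I would test $u$ from above at $x_0$ with $\phi:=v+c$. This $\phi\in C^2(\Omega)$ is convex (because $v$ is), and
\[
(u-\phi)(x)=(u-v)(x)-c\le 0=(u-\phi)(x_0)
\]
in a neighborhood of $x_0$. The viscosity subsolution property of $u$ then yields
\[
\det D^2 v(x_0)=\det D^2\phi(x_0)\ge F(x_0,u(x_0),D\phi(x_0))=F(x_0,u(x_0),Dv(x_0)).
\]
Combining this with the equation $\det D^2 v(x_0)=F(x_0,v(x_0),Dv(x_0))$ will give
\[
F(x_0,v(x_0),Dv(x_0))\ge F(x_0,u(x_0),Dv(x_0)).
\]

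For the final step I would invoke hypothesis (2.5): since $u(x_0)=v(x_0)+c>v(x_0)$, strict monotonicity of $F$ in $z$ forces $F(x_0,u(x_0),Dv(x_0))>F(x_0,v(x_0),Dv(x_0))$, contradicting the previous display. I expect the only delicate point to be identifying precisely why the hypothesis must be strengthened from (2.2) to the strictly increasing condition (2.5): in Lemma 2.1 the strict gap required to close the contradiction came directly from a strict inequality in the PDE for $v$, whereas here $v$ only satisfies an equation, so the strict gap must be produced entirely by the $z$-dependence of $F$ itself.
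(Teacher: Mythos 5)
Your proof is correct and follows essentially the same route the paper has in mind: the paper presents Lemma 2.2 as an immediate variant of Lemma 2.1, obtained by swapping the roles of $u$ and $v$ and letting the strict monotonicity of $F$ in $z$ supply the strict gap formerly provided by the strict PDE inequality; you carry out exactly this adaptation. One small remark: the vertical shift $\phi=v+c$ is harmless but unnecessary, since $\phi=v$ already satisfies $(u-\phi)(x)\le(u-\phi)(x_0)$ near $x_0$ and the definition of viscosity subsolution does not require $u-\phi$ to vanish at the touching point.
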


The following  Lemma was proved in \cite{[JL]}, which will be used to prove Corollary 1.2 and 1.3.

\begin{Lemma}\label{2.3}
Let $\Om$ be a bounded convex domain and $u\in C(\overline{\Om})$ be a  convex  function in $\Om$ with $u|_{\pom}=0$. If there are $\widetilde{\mu}\in(0,1]$ and  $C>0$
such that
 $$ |u(x)|\leq C{d_{x}}^{\widetilde{\mu}},\ \ \forall x\in \Om ,$$
  then $u\in C^{\widetilde{\mu}}(\overline{\Om})$ and
\begin{equation*}
 |u|_{C^{\widetilde{\mu}}(\overline{\Om})}\leq  C\{1+[diam(\Om)]^{\widetilde{\mu}}\}.
 \end{equation*}
\end{Lemma}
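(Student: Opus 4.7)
The plan is to bound $|u(x)-u(y)|$ by $C|x-y|^{\widetilde{\mu}}$ for every $x,y\in\bom$, by splitting into regimes based on the size of $|x-y|$ relative to the boundary distances $d_x,d_y$. If $y\in\pom$ the claim is immediate: $u(y)=0$ and $d_x\le|x-y|$, so the hypothesis gives $|u(x)-u(y)|\le Cd_x^{\widetilde{\mu}}\le C|x-y|^{\widetilde{\mu}}$. The case $x,y\in\pom$ is trivial.

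For two interior points I assume without loss of generality that $d_x\le d_y$. In the \emph{far} regime $|x-y|\ge d_x$, combining $|u(x)|\le Cd_x^{\widetilde{\mu}}\le C|x-y|^{\widetilde{\mu}}$ with the Lipschitz estimate $d_y\le d_x+|x-y|\le 2|x-y|$ (hence $|u(y)|\le 2^{\widetilde{\mu}}C|x-y|^{\widetilde{\mu}}$) gives the bound directly. In the \emph{close} regime $|x-y|<d_x\le d_y$, I reduce to a one-dimensional convex function by extending the segment $[x,y]$ to $\pom$: let $p$ be the intersection on the side of $x$ and $q$ on the side of $y$, so $|x-p|\ge d_x$ and $|y-q|\ge d_y$. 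Set $\phi(t)=u(p+t(q-p))$, which is convex on $[0,1]$ with $\phi(0)=\phi(1)=0$. Writing $x=p+s(q-p)$ and $y=p+r(q-p)$ with $0<s<r<1$, the three-chord inequality for convex functions yields
\[
\frac{\phi(s)}{s}\le\frac{\phi(r)-\phi(s)}{r-s}\le\frac{-\phi(r)}{1-r}.
\]
Reinterpreting through $s=|x-p|/|q-p|$, $1-r=|y-q|/|q-p|$ and $r-s=|x-y|/|q-p|$ gives
\[
|u(x)-u(y)|\le|x-y|\,\max\!\Bigl(\frac{|u(x)|}{|x-p|},\,\frac{|u(y)|}{|y-q|}\Bigr).
\]
Inserting $|u(x)|\le Cd_x^{\widetilde{\mu}}$ and $|x-p|\ge d_x$, together with $\widetilde{\mu}-1\le 0$ and $d_x>|x-y|$ (which forces $d_x^{\widetilde{\mu}-1}\le|x-y|^{\widetilde{\mu}-1}$), bounds the first ratio by $C|x-y|^{\widetilde{\mu}-1}$; the second ratio is handled identically, closing the Hölder estimate in this regime.

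Combining the three cases produces the Hölder seminorm bound, and the sup bound $\|u\|_{C^0(\bom)}\le C[\mathrm{diam}(\Om)]^{\widetilde{\mu}}$ (from $d_x\le\mathrm{diam}(\Om)$) yields the claimed $|u|_{C^{\widetilde{\mu}}(\bom)}\le C\{1+[\mathrm{diam}(\Om)]^{\widetilde{\mu}}\}$. The only nontrivial ingredient is the three-chord comparison producing the slope bound in the close regime; the rest is routine case-splitting and using that $x\mapsto d_x$ is $1$-Lipschitz.
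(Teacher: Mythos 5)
The paper does not prove this lemma itself --- it cites \cite{[JL]} --- so there is no in-paper proof to compare against. Your argument correctly identifies the right ingredients (boundary versus interior split, the secant-slope inequality) and the close-regime part is carried out cleanly with seminorm constant exactly $C$.

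However, your far-regime treatment ($|x-y| \ge d_x$) produces a Hölder seminorm bound of roughly $C(1+2^{\widetilde{\mu}})$ (or $2^{\widetilde{\mu}}C$ if you sharpen to $|u(x)-u(y)| \le \max(|u(x)|,|u(y)|)$, valid since $u\le 0$), not $\le C$; consequently your final estimate is $|u|_{C^{\widetilde{\mu}}}\le C\{1+2^{\widetilde{\mu}}+[\mathrm{diam}(\Om)]^{\widetilde{\mu}}\}$ rather than the stated $C\{1+[\mathrm{diam}(\Om)]^{\widetilde{\mu}}\}$. The regime split --- and the resulting loss --- is in fact unnecessary. With $p,x,y,q$ collinear and in order, $p,q\in\pom$, compare $u$ along the chord from $x$ all the way to $q$: convexity gives $u(y)\le \frac{|y-q|}{|x-q|}u(x)$, hence
\[
u(y)-u(x)\le \frac{|x-y|}{|x-q|}|u(x)|\le C\,|x-y|\,|x-q|^{\widetilde{\mu}-1}\le C|x-y|^{\widetilde{\mu}},
\]
where the second step uses $d_x\le|x-q|$ (the distance to the \emph{far} boundary point, not the near one) and the last step uses $|x-q|\ge|x-y|$, which holds automatically because $y$ lies between $x$ and $q$. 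The symmetric chord from $p$ to $y$ gives $u(x)-u(y)\le C|x-y|^{\widetilde{\mu}}$, and the two together yield the seminorm bound $\le C$ with no case distinction at all. So your proof establishes the qualitative conclusion $u\in C^{\widetilde{\mu}}(\bom)$, but to land on the stated constant you should replace the $|x-y|$-versus-$d_x$ dichotomy with this one-shot chord comparison using the far endpoint.
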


\section { Proof of theorem 1.1 and its corollaries}

In this  section we 
 first construct subsolutions to problem (1.1).  This construction is technical and delicate. Then we will use Lemmas 2.1 and 2.3 to
prove Theorems 1.1   and Corollary  1.2,  while Corollary 1.3 follows directly from the proof of Corollary 1.2.

{\bf Proof of Theorem 1.1.}\;\; We assume $k\leq n-2$, since the case $k=n-1$ was proved in \cite{[JLL]}.

{\bf Step 1.}\; {\sl  Normalize the domain and the conclusion required to be proved.}

By (1.5), our problem is invariant under translation and rotation transforms. Using the $k$-strictly  convexity assumption on the $\Om$,  we may  assume that
$$x_{0}=\textbf{0}  \ \   \text{and} \ \   \Om\subseteq\{x\in R^{n}|x_{k+1}>\eta_{1}|x_{1}|^{a_{1}}+...+\eta_{n-1}|x_{k}|^{a_{n-1}}\}$$
for some positive integer $k\leq n-2$ and positive constants $\eta_{1} , \cdots , \eta_{k}.$  It is enough to prove
that there is a positive constant $C$, depending only on  $a_{1},..., a_{k}, n, A, \alpha, \beta, \gamma$ and the
$\eta_1, \cdots, \eta_{k}$,  such that  for all $ y=(0,..., 0, y_{k+1}, 0, \cdots, 0)\in\Omega$,
\begin{equation}\label {3.1}
 Cy_{k+1}^{\mu}\geq |u(0,.., 0, y_{k+1}, 0, \cdots, 0)| .
 \end{equation}
 Obviously, (3.1) is more than the desired (1.6). In fact, if $x=(x_1, \cdots, x_k, x_{k+1}, \cdots, x_n)\in \Om$ such that $d_x=|x-x_0|=|x|$,
 but $x_j\neq 0$ for some $j\{1, \cdots, k, k+2, \cdots, n\}$, then by the convexity we have
 $$d_x\leq dist (x, \{x_{k+1}=0\})=x_{k+1}<|x|,$$
 which is impossible.

 Observing that for each $ 1\leq i \leq k$,
  \begin{equation}\label{3.2}
\begin{split}
&\Om\subseteq\{x\in R^{n}|\;x_{k+1}>\eta_{1}|x_{1}|^{a_{1}}+...+\eta_{k}|x_{k}|^{a_{k}}\}\\
&\subseteq\{x\in R^{n}|\;x_{k+1}>\eta_{i}|x_{i}|^{a_{i}}\}\\
&=\{x\in R^{n}| \; (\frac{\varepsilon}{\eta_{i}})^{\frac{2}{a_{i}}}(\frac{x_{k+1}}{\varepsilon})^{\frac{2}{a_{i}}}>|x_{i}|^{2}\} \\
&\subset \{x\in R^{n}|\;\delta(\varepsilon)(\frac{x_{k+1}}{\varepsilon})^{\frac{2}{a_{i}}}>|x_{i}|^{2}\},
\end{split}
\end{equation}
where
\begin{equation} \label{3.3}
 \delta(\varepsilon):=\max\limits_{1\leq i\leq k}(\frac{\varepsilon}{\eta_{i}})^{\frac{2}{a_i}},
\end{equation} ,
$0<\varepsilon <\min\{ 1, d, \min\limits_{1\leq i\leq k}\eta_i\} $ is to be determined, and    $d=diam(\Om)$ is the diameter of the $\Omega$.

{\bf Step 2.} \; {\sl Construct the function $H(x)=H(x_1, \cdots , x_{k+1})$ and calculate $D^2 H$.}

Again for each $i\in \{1, 2, ..., k\}$, let
\begin{equation} \label{3.4}
b_{i}= \frac{2}{a_{i}}\cdot \frac{ n+\alpha-\gamma}{ \bar a+1 +\beta-n-\gamma},
\end{equation}  and
\begin{equation}\label{3.5}
   H_{i}(x)=-[(\frac{x_{k+1}}{\varepsilon})^{\frac{2}{a_{i}}}-x_{i}^{2}]^{\frac{1}{b_{i}}}\; (1\leq i\leq k), \ \ H(x)=
  \sum\limits_{i=1}^{k}H_{i}(x),\ \ \ \forall x\in\Om .
\end{equation}
By a direct  calculation we have
\begin{equation}\label{3.6}
\begin{split}
H_{x_{i}x_{j}}&=H_{x_{j}x_{i}}=0, \ \ \  \text{ for all }\; i\; \text{ and }\; j\in \{k+2,  ..., n\}\\
H_{x_{i}x_{j}}&=0, \ \ \  \forall i, j \in \{1, 2, ..., k\},\ i\neq j\\
H_{x_{j}x_{j}}&=
 \frac{2}{b_{j}}|H_{j}|^{1-b_{j}}+\frac{4(b_{j}-1)}{b_{j}^{2}}|H_{j}|^{1-2b_{j}} x_{j}^{2}, \ \ \forall j\in \{1, 2, ..., k\}\\
 H_{x_{j}x_{k+1}}&=(H_{j})_{x_{j}x_{k+1}}=-\frac{4(b_{j}-1)}{a_{j}b_{j}^{2}}|H_{j}|^{1-2b_{j}}(\frac{x_{k+1}}{\varepsilon})^{\frac{2}{a_{j}}-1} x_{j}\frac{1}{\varepsilon}, \ \ \forall j\in \{1, 2, ..., k\} \\
H_{x_{k+1}x_{k+1}}&=\sum\limits_{i=1}^{k}[\frac{2(a_{i}-2)}{a_{i}^{2}b_{i}}|H_{i}|^{1-b_{i}}(\frac{x_{k+1}}{\varepsilon})^{\frac{2}{a_{i}}-2}
(\frac{1}{\varepsilon})^{2}\\
&\ \ +\frac{4(b_{i}-1)}{a_{i}^{2}b_{i}^{2}}|H_{i}|^{1-2b_{i}}(\frac{x_{k+1}}{\varepsilon})^{\frac{4}{a_{i}}-2}(\frac{1}{\varepsilon})^{2}].
 \end{split}
\end{equation}
It follows from (1.7), (3.4)  and (1.4) that
\begin{equation}\label {3.7}
 \mu =\frac{2}{a_{j}b_{j}} , \ \ \forall j\in \{1, 2, ..., k\}   \ \ \text{and}\;\; \mu \in (0, 1).
 \end{equation}
 Hence, for each $j\in \{1, 2, ..., k\}$,  by   (3.2) and (3.5) we obtain
\begin{equation}\label{3.8}
  (1-\delta(\varepsilon))^{\frac{1}{b_{k}}}\cdot(\frac{x_{k+1}}{\varepsilon})^{\mu}\leq
|H_{j}| \leq  (\frac{x_{k+1}}{\varepsilon})^{\mu}.
\end{equation}
  This, together with (3.2), (3.6) and the fact $b_j>0$, implies that  for each $j\leq k$ and  for all $x\in \Om$,
\begin{equation} \label{3.9}
\begin{split}
H_{x_jx_j}\geq &\frac{2}{b_j}\cdot \min\{(1-\delta(\varepsilon))^{\frac{1-b_j}{b_j}},  \; 1 \}\cdot(\frac{x_{k+1}}{\varepsilon})^{\mu (1-b_j)}\\
&-\frac{4|b_j-1|}{b_j^2}\cdot \max\{(1-\delta(\varepsilon))^{\frac{1-2b_j}{b_j}},  \; 1 \}\cdot(\frac{x_{k+1}}{\varepsilon})^{\mu(1-2b_j)}\cdot\delta(\varepsilon)\cdot(\frac{x_{k+1}}{\varepsilon})^{\mu b_j}\\
=&\frac{2}{b_j} [\min\{(1-\delta(\varepsilon))^{\frac{1-b_j}{b_j}}, \; 1\} -\delta(\varepsilon) \frac{2|b_j-1|}{b_j} \max\{(1-\delta(\varepsilon))^{\frac{1-2b_j}{b_j}},  \; 1\}]\\
& \ \ \cdot(\frac{x_{k+1}}{\varepsilon})^{\mu (1-b_{j})}\\
 := & c_{j}(\varepsilon)\cdot(\frac{x_{k+1}}{\varepsilon})^{\mu(1-b_j)}.
\end{split}
\end{equation}
Observing that  $\lim\limits_{\varepsilon\rightarrow 0} \delta(\varepsilon)=0$ by (3.3), we have
\begin{equation} \label{3.10}
 \lim\limits_{\varepsilon\rightarrow 0} c_{j}(\varepsilon)
=\frac{2}{b_{j}}=a_{j}\mu>0 , \ \ j=1, 2, \cdots, k,.
\end{equation}
Denote $ \xi_{j}(x)= |H_{j}(x)| (\frac{x_{k+1}}{\varepsilon})^{-\mu}$. It follows from $(3.8)$ that
\begin{equation}\label{3.11}
  (1-\delta(\varepsilon))^{\frac{1}{b_{j}}}\leq  \xi_{j}(x)\leq 1 , \ \    \forall x\in \Om, \ \ \forall j\in \{1, 2, ..., n-1\},
 \end{equation}
 and from $(3.6)$ and (3.7) that
\begin{equation}\label{3.12}
\begin{split}
H_{x_{k+1}x_{k+1}}&=\sum\limits_{j=1}^{k}[\frac{2(a_{j}-2)}{a_{j}^{2}b_{j}}\xi_{j}^{1-b_{j}}(\frac{x_{k+1}}{\varepsilon})^{\mu-2}
(\frac{1}{\varepsilon})^{2}
+\frac{4(b_{j}-1)}{a_{j}^{2}b_{j}^{2}}\xi_{j}^{1-2b_{j}}(\frac{x_{k+1}}{\varepsilon})^{\mu-2}(\frac{1}{\varepsilon})^{2}]\\
&=(\frac{1}{\varepsilon})^{2}\cdot\mu^{2}\sum\limits_{j=1}^{k}[(\frac{1}{\mu}-b_{j})\xi_{j}^{1-b_{j}}
+(b_{j}-1)\xi_{j}^{1-2b_{j}}]\cdot(\frac{x_{k+1}}{\varepsilon})^{\mu-2}\\
&:=(\frac{1}{\varepsilon})^{2}\cdot c_{k+1}(\varepsilon)\cdot(\frac{x_{k+1}}{\varepsilon})^{\mu-2}.
\end{split}
\end{equation}

It follows from (3.11) and (3.3) that \begin{equation}\label{3.13}
\lim\limits_{\varepsilon\rightarrow 0} \xi_{j}=1\ \ \text{ uniformly for }\; x\in \Om ,
  \end{equation} which yields
\begin{equation}\label{3.14}
 \lim\limits_{\varepsilon\rightarrow 0} c_{k+1}(\varepsilon)
 =k(\frac{1}{\mu}-1)\mu^{2}>0  \ \text{ uniformly for }\; x\in \Om .
 \end{equation}

 Again by $(3.2)$ and  $(3.6)$ we have
\begin{equation} \label{3.15}
\begin{split}
|H_{x_jx_{k+1}}|\leq&\frac{4|b_{j}-1|}{a_{j}b_{j}^{2}}\cdot\xi_{j}^{1-2b_{j}}\cdot(\frac{x_{k+1}}{\varepsilon})^{\mu-\frac{2}{a_{j}}-1}\cdot (\delta(\varepsilon))^{\frac{1}{2}}\cdot(\frac{x_{k+1}}{\varepsilon})^{\frac{1}{a_{j}}}\cdot\frac{1}{\varepsilon}\\
=&(\delta(\varepsilon))^{\frac{1}{2}}\cdot\frac{1}{\varepsilon}\cdot\frac{4|b_{j}-1|}{a_{j}b_{j}^{2}}\xi_{j}^{1-2b_{j}}\cdot(\frac{x_{k+1}}
{\varepsilon})^{\mu-\frac{1}{a_{j}}-1}\\
:=&(\delta(\varepsilon))^{\frac{1}{2}}\cdot\frac{1}{\varepsilon}\cdot \widetilde{c_{j}}(\varepsilon)\cdot(\frac{x_{k+1}}{\varepsilon})^{\mu-\frac{1}{a_{j}}-1}, \ \ \ \forall j\leq k,
\end{split}
\end{equation}
 where $\widetilde{c_{k}}(\varepsilon)$ satisfies
\begin{equation}\label{3.16}
 \lim\limits_{\varepsilon\rightarrow 0} \widetilde{c_{j}}(\varepsilon)
=\frac{4|b_{j}-1|}{a_{j}b_{j}^{2}}  \ \text{ uniformly for }\; x\in \Om
 \end{equation}
  by (3.13).

 Now we use (3.6) to write
 \begin{equation}\label{3.17}
  D^{2}H=\begin{pmatrix} E_{k+1} & 0 \\ 0 &   0_{n-k-1} \end{pmatrix},
  \end{equation}
  where $0_{n-k-1}$ is the zero square   matrix of order $n-k-1$,
  $$E_{k+1}:=\begin{pmatrix} A_{k} & \overrightarrow{v} \\ \overrightarrow{v}^{T} &  H_{x_{k+1}x_{k+1}} \end{pmatrix},$$
 $A_{k}=diag(H_{x_{1}x_{1}},..., H_{x_{k}x_{k}})$ and $\overrightarrow{v}^{T}=(H_{x_{1}x_{k+1}},..., H_{x_{k}x_{k+1}})$.
Notice that
\begin{equation}\label{3.18}
\det E_{k+1} =\ det A_{k}\cdot(H_{x_{k+1}x_{k+1}}-\overrightarrow{v}^{T}A_{k}^{-1}\overrightarrow{v}).
 \end{equation}

It follows from (3.9), (3.15) and (3.7) that
\begin{equation}\label{3.19}
\begin{split}
\overrightarrow{v}^{T}A_{k}^{-1}\overrightarrow{v}=&\sum\limits_{i=1}^{k}\frac{[H_{x_{i}x_{k+1}}]^{2}}{H_{x_{i}x_{i}}}  \\
\leq &\sum\limits_{i=1}^{k}\frac{[(\delta(\varepsilon))^{\frac{1}{2}}\cdot\frac{1}{\varepsilon}\cdot \widetilde{c_{i}}(\varepsilon)\cdot(\frac{x_{k+1}}{\varepsilon})^{\mu-\frac{1}{a_{i}}-1}]^{2}}{c_{i}(\varepsilon)\cdot(\frac{x_{k+1}}
{\varepsilon})^{\mu(1-b_{i})}}  \\
=&\delta(\varepsilon)\cdot(\frac{1}{\varepsilon})^{2}\cdot \sum\limits_{i=1}^{k}\frac{ (\widetilde{c_{i}}(\varepsilon))^{2}}{c_{i}(\varepsilon)}\cdot(\frac{x_{k+1}}{\varepsilon})^{\mu-2}.
\end{split}
\end{equation} By this,  (3.9),(3.12), (3.18) and again (3.7),  we have
 \begin{equation}\label{3.20}
\begin{split}
& \det E_{k+1}\geq [\sum_{i=1}^k c_{i}(\varepsilon)\cdot(\frac{x_{k+1}}{\varepsilon})^{\mu(1-b_{i})}]
  \cdot(\frac{1}{\varepsilon})^{2}[ c_{k+1}(\varepsilon) -\delta(\varepsilon)\cdot \sum\limits_{i=1}^{k}\frac{ (\widetilde{c_{i}}(\varepsilon))^{2}}{c_{i}(\varepsilon)} ]\cdot(\frac{x_{k+1}}{\varepsilon})^{\mu-2}  \\
&=(\frac{1}{\varepsilon})^{2}[\sum_{i=1}^k c_{i}(\varepsilon)] [ c_{k+1}(\varepsilon)-\delta(\varepsilon)\cdot \sum\limits_{i=1}^{k}\frac{ (\widetilde{c_{i}}(\varepsilon))^{2}}{c_{i}(\varepsilon)}]\cdot(\frac{x_{k+1}}{\varepsilon})^{\mu(k+1-b_{1} -b_2\cdots -b_{k})-2} \\
&= (\frac{1}{\varepsilon})^{2}\cdot [\sum_{i=1}^kc_{i}(\varepsilon)]  [ c_{k+1}(\varepsilon)-\delta(\varepsilon) \sum\limits_{i=1}^{k}\frac{ (\widetilde{c_{i}}(\varepsilon))^{2}}{c_{i}(\varepsilon)}]\cdot(\frac{x_{k+1}}{\varepsilon})^{(k+1)\mu-\bar a-2}  \\
& :=(\frac{1}{\varepsilon})^{2}\cdot \tau_{1}(\varepsilon)\cdot(\frac{x_{k+1}}{\varepsilon})^{(k+1)\mu-\bar a -2},
\end{split}
\end{equation}
 where
\begin{equation} \label{3.21}
\begin{split}
&\lim\limits_{\varepsilon\rightarrow 0} \tau_{1}(\varepsilon)
  =\lim\limits_{\varepsilon\rightarrow 0} c_{1}(\varepsilon)...c_{k}(\varepsilon)[ c_{k+1}(\varepsilon)-\delta(\varepsilon) \sum\limits_{k=1}^{k}\frac{ (\widetilde{c_{k}}(\varepsilon))^{2}}{c_{k}(\varepsilon)}]\\
&\ \ =\lim\limits_{\varepsilon\rightarrow 0} c_{1}(\varepsilon)...\lim\limits_{\varepsilon\rightarrow 0} c_{k}(\varepsilon)[\lim\limits_{\varepsilon\rightarrow 0} c_{k+1}(\varepsilon)-0]\\
&\ \ =a_{1}...a_{k}\mu^{k}k(\frac{1}{\mu}-1)\mu^{2}\\
 &\ \ = ka_{1}...a_{k}\mu^{(k+1)}(1-\mu)>0  \ \ \text{ uniformly for }\; x\in \Om .
\end{split}
\end{equation}
Here we have used (3.10), (3.14), (3.16) and (3.3).

{\bf Step 3.} \; {\sl Construct a function $G(x)=G(x_{k+1}, \cdots, x_n)$   such that for  $\varepsilon$ small enough and $M$ large enough, the function $W(x):=M(H(x)+G(x))\in C^2(\Om)\cap C(\bar \Om)$ is a (strict) sub-solution to problem (1.2). That is
\begin{equation}\label{3.22}
  F[ W]:=[F(x, W, \nabla W)]^{-1}\det  D^2 W(x)> 1, \  \forall x\in\Om \ \ \text{and} \;  W|_{\pom}\leq0 .
\end{equation}  }
Let
\begin{equation} \label{3.23}
G(x):=-(\frac{x_{k+1}}{\varepsilon})^{\mu}\sqrt{\Lambda^2-\sum_{i=k+2}^n x_i^2} , \ \ \   \Lambda:=\sqrt{2d^2+1}.
\end{equation}
By a direct  calculation we have
\begin{equation}\label{3.24}
\begin{split}
 &G_{x_i}=0 \ \ \  \text{ for all }\; i\in \{1, 2, \cdots, k\}\\
&G_{x_{k+1}}=- \frac{\mu}{ \varepsilon}(\frac{x_{k+1}}{\varepsilon})^{\mu-1}\sqrt{\Lambda^2-\sum_{i=k+2}^n x_i^2}\\
&G_{x_j}=(\frac{x_{k+1}}{\varepsilon})^{\mu}\frac{x_j}{\sqrt{\Lambda^2-\sum_{i=k+2}^n x_i^2}} \ \ \  \text{ for all }\; j\in \{k+2, \cdots, n\}\\
&G_{x_ix_j}=G_{x_jx_i}=0 \ \ \  \text{ for all }\; i,\;   j\in \{1,  ..., k\}\\
 &G_{x_ix_j}=G_{x_jx_i}=(\frac{x_{k+1}}{\varepsilon})^{\mu}\frac{1}{\sqrt{\Lambda^2-\sum_{i=k+2}^n x_i^2}}[\delta_{ij}+\frac{x_ix_j}{\Lambda^2-\sum_{i=k+2}^n x_i^2} ]\\
 &\ \ \ \    \text{ for all }\; i, j\in \{k+2, \cdots, n\}\\
 & G_{x_{j}x_{k+1}} =G_{x_{k+1}x_{j}}=\frac{\mu} {\varepsilon }(\frac{x_{k+1}}{\varepsilon})^{\mu-1}\frac{x_j}{\sqrt{\Lambda^2-\sum_{i=k+2}^n x_i^2}}\ \ \text{ for all }\; j\in\; \{1, \cdots, k\}\\
   G_{x_{k+1}x_{k+1}}&=\frac{\mu(1-\mu)}{ \varepsilon^2} (\frac{x_{k+1}}{\varepsilon})^{\mu-2} \sqrt{\Lambda^2-\sum_{i=k+2}^n x_i^2}.
 \end{split}
\end{equation}
Hence
\begin{equation}\label{3.25}
  D^{2}G(x)=\begin{pmatrix} 0_{k} & \vec{P} \\  \vec{Q} &   E_{n-k-1}(x) \end{pmatrix},
  \end{equation}
  where $0_{k}$ is the zero square   matrix of order $k$, and
  \begin{equation*}
\begin{split}
  &\vec{P}^T=\frac{\mu}{\varepsilon}(\frac{x_{k+1}}{\varepsilon})^{\mu-1}\frac{1}{\sqrt{\Lambda^2-\sum_{i=k+2}^n x_i^2}}(x_1, \cdots, x_k)\\
  &\vec{Q}^T=\frac{\mu}{ \varepsilon } (\frac{x_{k+1}}{\varepsilon})^{1-\mu}(\frac{x_1}{\sqrt{\Lambda^2-\sum_{i=k+2}^n x_i^2}},
  \cdots, \frac{x_k}{\sqrt{\Lambda^2-\sum_{i=k+2}^n x_i^2}}, \frac{\mu-1}{ \varepsilon } \frac{\varepsilon}{x_{k+1}} )\\
  &E_{n-k-1}(x):= \left[ G_{x_ix_j}\right]_{k+2\leq i, j\leq n}.
  \end{split}
\end{equation*}

  Since all the eigenvalues of $E_{n-k-1}(x)$ are
  $$(\frac{x_{k+1}}{\varepsilon})^{\mu}\frac{1}{\sqrt{\Lambda^2-\sum_{i=k+2}^n x_i^2}}, \ \ \cdots, \ \  (\frac{x_{k+1}}{\varepsilon})^{\mu}\frac{1}{\sqrt{\Lambda^2-\sum_{i=k+2}^n x_i^2}}, \ \ (\frac{x_{k+1}}{\varepsilon})^{\mu}\frac{\Lambda^2}{(\Lambda^2-\sum_{i=k+2}^n x_i^2)^{\frac{3}{2}}},$$
  we have
\begin{equation}\label{3.26}
 \begin{split}
 \det E_{n-k-1}&=(\frac{x_{k+1}}{\varepsilon})^{\mu(n-k-1)}\frac{\Lambda^2}{(\Lambda^2-\sum_{i=k+2}^n x_i^2)^{\frac{n-k+1}{2}}}\\
 &\geq (\frac{x_{k+1}}{\varepsilon})^{\mu(n-k-1)}\Lambda^{k+1-n}.
 \end{split}
\end{equation}

Therefore, for any positive constant $M$,  the function $W(x):=M(H(x)+G(x))\in C^2(\Om)\cap C(\bar \Om)$ and satisfies
  $W|_{\pom}\leq0$.  Moreover, letting $B:=D^2G-diag( 0_{k}, 0, E_{n-k-1})$
  and using the fact $\mu\in (0, 1)$, we see that the matrix $B\geq 0$ (non-negative definite). Using the elementary inequality
  $$\det (A_1+A_2)\geq [(\det A_1)^{\frac{1}{n}}+ (\det A_2)^{\frac{1}{n}}]^n$$
  for any non-negative definite matrix $A_1,A_2$ of $n$-order, by (3.17), (3.20), (3.25) and (3.26) we have obtained
   \begin{equation}\label{3.27}
   \begin{split}
 \det D^2 W(x)&=M^n\det [diag(E_{k+1},  E_{n-k-1})+  B]\\
 & \geq M^n[(\det diag(E_{k+1},  E_{n-k-1}))^{\frac{1}{n}}+ (\det B)^{\frac{1}{n}}]^n\\
 &\geq M^n\det diag(E_{k+1},  E_{n-k-1})\\
 &\geq M^n(\frac{1}{\varepsilon})^{2}  \tau_{1}(\varepsilon) (\frac{x_{k+1}}{\varepsilon})^{(k+1)\mu-\bar a -2}
 \cdot
 (\frac{x_{k+1}}{\varepsilon})^{\mu(n-k-1)}\Lambda^{k+1-n}\\
 &= M^n\Lambda^{k+1-n}(\frac{1}{\varepsilon})^{2}  \tau_{1}(\varepsilon)(\frac{x_{k+1}}{\varepsilon})^{n\mu-\bar a -2},
   \ \ x\in \Om .
 \end{split}
   \end{equation}

  To prove the claim stated in the Step 3, it is sufficient to find   a small  $\varepsilon$   and  a large $M$  such that the
  function $W$ satisfies the first inequality in (3.22).  For this purpose we need to  estimate
  $|\nabla  W|$  and $W$ in $\Omega$.

  Observing that $\Lambda =\sqrt{2d^2+1} $ and $d=diam(\Om)$,
  we have
  \begin{equation}\label{3.28}
   (\frac{x_{k+1}}{\varepsilon})^{\mu}\leq  |G|\leq \Lambda (\frac{x_{k+1}}{\varepsilon})^{\mu}, \ \  x\in \Om ,
   \end{equation}
 and
 $$|\nabla G(x)|^2=(\frac{x_{k+1}}{\varepsilon})^{2(\mu-1)}[(\frac{x_{k+1}}{\varepsilon})^{2}\cdot\frac{\sum_{i=k+2}^n x_i^2}{\Lambda^2-\sum_{i=k+2}^nx_i^2}
 +(\frac{\mu}{\varepsilon})^2(\Lambda^2-\sum_{i=k+2}^nx_i^2)].$$
 Observing that
 $$(\frac{x_{k+1}}{\varepsilon})^{2}\cdot\frac{\sum_{i=k+2}^n x_i^2}{\Lambda^2-\sum_{i=k+2}^nx_i^2}\leq (\frac{d}{\varepsilon})^2
 \frac{d^2}{\Lambda^2-d^2}\leq (\frac{d}{\varepsilon})^2\; \text{and}\;  \Lambda^2-\sum_{i=k+2}^nx_i^2\geq d^2+1\geq 1 $$
 for all $x\in \Om$, by the fact $\mu, \varepsilon \in (0,1)$ we get
  \begin{equation}\label{3.29}
 (\frac{\mu}{\varepsilon})^2 (\frac{x_{k+1}}{\varepsilon})^{2(\mu-1)}\leq |\nabla G(x)|^2
 \leq  (\frac{2\Lambda}{\varepsilon})^2 (\frac{x_{k+1}}{\varepsilon})^{2(\mu-1)} , \ \  x\in \Om .
   \end{equation}

 On the other hand, recall the definition for $\xi_i$ before (3.11).  It follows from (3.5) and (3.7) that
\begin{equation*}
\begin{split}
|H_{x_{k+1}}|=&|\sum\limits_{i=1}^{k}-\frac{1}{b_{i}}\cdot[(\frac{x_{k+1}}{\varepsilon})^{\frac{2}{a_{i}}}-x_{i}^{2}]^{\frac{1}{b_{i}}
-1}\cdot\frac{2}{a_{i}}(\frac{x_{k+1}}{\varepsilon})^{\frac{2}{a_{i}}-1}\cdot\frac{1}{\varepsilon}|\\
=&\frac{1}{\varepsilon}\cdot\mu\sum\limits_{i=1}^{k}|H_{i}|^{1-b_{i}}\cdot(\frac{x_{k+1}}{\varepsilon})^{\frac{2}{a_{i}}-1}\\
=&\frac{1}{\varepsilon}\cdot\mu\sum\limits_{i=1}^{k}\xi_{i}^{1-b_{i}}\cdot(\frac{x_{k+1}}{\varepsilon})^{\mu-1}.
\end{split}
\end{equation*}
 Using (3.2) to estimate $|x_i|$,
  we obtain that for $i\in \{1, 2, ..., k\}$,
\begin{equation*}
\begin{split}
|H_{x_{i}}|=&|\frac{2}{b_{i}}\cdot[(\frac{x_{k+1}}{\varepsilon})^{\frac{2}{a_{i}}}-x_{i}^{2}]^{\frac{1}{b_{i}}-1}\cdot x_{i}|\\
=&\frac{2}{b_{i}}\cdot|H_{i}|^{1-b_{i}}\cdot |x_{i}|\\
\leq&\frac{2}{b_{i}}\cdot\xi_{i}^{1-b_{i}}(\frac{x_{k+1}}{\varepsilon})^{\mu(1-b_{i})}\cdot (\delta(\varepsilon))^{\frac{1}{2}}(\frac{x_{k+1}}{\varepsilon})^{\frac{1}{a_{i}}}\\
=&(\delta(\varepsilon))^{\frac{1}{2}}\frac{2}{b_{i}}\cdot\xi_{i}^{1-b_{i}}(\frac{x_{k+1}}{\varepsilon})^{\mu-\frac{1}{a_{i}}}.
\end{split}
\end{equation*}

 Hence, for $i\in \{1, 2, ..., k\}$  we have
\begin{equation*}
\begin{split}
\frac{|H_{x_i}|}{|H_{x_{k+1}}|}\leq & \frac{(\delta(\varepsilon))^{\frac{1}{2}}2(b_i)^{-1}\cdot\xi_{i}^{1-b_i}(\frac{x_{k+1}}{\varepsilon})^{\mu-\frac{1}{a_i}}}
{\frac{1}{\varepsilon}\cdot\mu \sum\limits_{j=1}^{k}\xi_{j}^{1-b_{j}}\cdot(\frac{x_{k+1}}{\varepsilon})^{\mu-1}}\\
=&\varepsilon\cdot(\delta(\varepsilon))^{\frac{1}{2}}\cdot a_i\cdot(\frac{x_{k+1}}{\varepsilon})^{1-\frac{1}{a_i}}\cdot\xi_{i}^{1-b_i}(\sum\limits_{j=1}^{k}\xi_{j}^{1-b_j})^{-1}\\
\leq&\varepsilon\cdot(\delta(\varepsilon))^{\frac{1}{2}}\cdot a_{i}\cdot(\frac{d}{\varepsilon})^{1-\frac{1}{a_{i}}}\cdot\xi_{i}^{1-b_{i}}(\sum\limits_{j=1}^{k}\xi_{j}^{1-b_{j}})^{-1} \\
\leq &\varepsilon\cdot(\delta(\varepsilon))^{\frac{1}{2}}\cdot \max\limits_{1\leq k\leq k}a_{k}\cdot(\frac{d}{\varepsilon})^{1-( \max\limits_{1\leq j\leq k}a_j)^{-1}}\cdot\xi_{i}^{1-b_{i}}(\sum\limits_{j=1}^{k}\xi_{j}^{1-b_{j}})^{-1}  \\
 =& \hat {a} \varepsilon^{    \check{a}} (\delta(\varepsilon))^{\frac{1}{2}}    d^{1- \check{ a }}\xi_{i}^{1-b_{i}}(\sum\limits_{j=1}^{k}\xi_{j}^{1-b_{j}})^{-1}
\end{split}
\end{equation*}
for $\hat a:=\max\limits_{1\leq j\leq k}a_j$ and $ \check{ a }:= (\max\limits_{1\leq j\leq k}a_j)^{-1}. $  

 This implies that
\begin{equation*}
\frac{\sum\limits_{j=1}^{k}|H_{x_j}|}{|H_{x_{k+1}}|}\leq  \hat a  \varepsilon^{    \check{a}}
 (\delta(\varepsilon))^{\frac{1}{2}}   d^{1-      \check{a}}
:=\tau_{2}(\varepsilon).
 \end{equation*}

Observe that
\begin{equation} \label{3.30}
\lim\limits_{\varepsilon\rightarrow 0}\tau_{2}(\varepsilon)=0
\end{equation} by (3.3).  In this way we have obtained that
\begin{equation}\label {3.31}
\begin{split}
|\nabla H|\in &[|H_{x_{k+1}}|,\ \ \ (1+\tau_{2}(\varepsilon))|H_{x_{k+1}}|]\\
=&[\frac{\mu}{\varepsilon}\cdot \sum\limits_{i=1}^{k}\xi_{i}^{1-b_{i}}\cdot(\frac{x_{k+1}}{\varepsilon})^{\mu-1},\ \ \ \frac{\mu}{\varepsilon}\cdot(1+\tau_{2}(\varepsilon)) \sum\limits_{i=1}^{k}\xi_{i}^{1-b_{i}}\cdot(\frac{x_{k+1}}{\varepsilon})^{\mu-1}].
\end{split}
\end{equation}

Since $\mu\in (0, 1)$, we have
\begin{equation*}
\begin{split}
|\nabla H|\geq& \frac{\mu}{\varepsilon}\cdot \sum\limits_{i=1}^{k}\xi_{k}^{1-b_{i}}\cdot(\frac{d}{\varepsilon})^{\mu-1}\\
=& \varepsilon^{-\mu}d^{\mu-1}\cdot\mu\sum\limits_{i=1}^{k}\xi_{i}^{1-b_{i}}.
\end{split}
\end{equation*}

Using this estimate and (3.13),  one can find a
  $ \varepsilon_1\in (0, 1)$ such that
 $|\nabla H(x) |\geq1$ for all $x\in \Om $ and all $  \varepsilon\in (0, \varepsilon_1)$.   Hence, we have
\begin{equation}\label{3.32}
 |\nabla H(x)|\leq |(1+|\nabla H(x)|^{2})^{\frac{1}{2}} \leq  \sqrt{2}|\nabla H(x)|, \ \ \forall x\in \Om ,
\end{equation}
which, together with  the elementary inequality $\sqrt{|a|+|b|}\leq \sqrt{|a|}+\sqrt{|b|}$, yields
\begin{equation}\label{3.33}
\begin{split}
M|\nabla H| \leq (1+|\nabla W|^2)^{\frac{1}{2}}&\leq [1+2M^2(|\nabla H|^2+|\nabla G|^2)]^{\frac{1}{2}}\\
&\leq (1+2M^2|\nabla H|^2 )^{\frac{1}{2}}+(2 M^2|\nabla G|^2)^{\frac{1}{2}}\\
&\leq\sqrt{2} M(\sqrt{2}|\nabla H| +|\nabla G|)\\
&\leq \sqrt{2}M[\sqrt{2}|\nabla H| +\frac{2\Lambda}{\varepsilon}(\frac{x_{k+1}}{\varepsilon})^{\mu-1}]\\
&\leq  M( 2+\frac{2\sqrt{2}\Lambda}{\mu\sum_{i=1}^k\xi_i^{1-b_i}})|\nabla H| ,
\end{split}
\end{equation}
where we have assumed $M\geq 1$ and used  (3.29) and (3.31) for the last two inequalities.

Next, denote $\bar \xi(x)=[\mu\sum\limits_{i=1}^{k}\xi_{i}^{1-b_{i}}]$.
  Whether  $\gamma\geq0$ or $\gamma<0$, combing (3.33) with (3.31) we always have
\begin{equation}\label{3.34}
\begin{split}
&(1+|\nabla  {W}|^{2})^{\frac{-\gamma}{2}}\geq M^{-\gamma} \min\{1, \; (2+\frac{2\sqrt{2}\Lambda}{\bar \xi})^{-\gamma}\} |\nabla H|^{-\gamma}\\
&\geq M^{-\gamma} (\frac{1}{\varepsilon})^{-\gamma}\cdot \min\{1, \; ( 2+ \frac{2\sqrt{2}\Lambda}{\bar \xi})^{-\gamma}\}
\min\{  1, \;  (1+\tau_{2}(\varepsilon))^{-\gamma}\}{\bar \xi }^{-\gamma} \cdot(\frac{x_{k+1}}{\varepsilon})^{\gamma-\mu\gamma}\\
 & :=M^{-\gamma}(\frac{1}{\varepsilon})^{-\gamma}\cdot\tau_{3}(\varepsilon)\cdot(\frac{x_{k+1}}{\varepsilon})^{\gamma-\mu\gamma}\ \ in \; \Omega .
\end{split}
\end{equation}

 Observe that $\tau_{3}(\varepsilon)$  satisfies
\begin{equation}\label{3.35}
 \lim\limits_{\varepsilon\rightarrow 0}\tau_{3}(\varepsilon)=\min\{1, \; ( 2+\frac{2\sqrt{2}\Lambda}{\mu k})^{ -\gamma }  \}  (\mu k)^{-\gamma}>0   \ \ \text{ uniformly for }\; x\in \Om
  \end{equation}
due to  (3.13) and  (3.30).

On the other hand, it is obvious from the expressions of $H$ and $G$ that
$$ (\frac{x_{k+1}}{\varepsilon})^{\mu}\leq |H+G|\leq (\Lambda+k)(\frac{x_{k+1}}{\varepsilon})^{\mu}, \ \ x\in \Om.$$
Therefore, no matter $\alpha\geq 0 $ or $\alpha <0$, we always  have
 \begin{equation}\label{3.36}
\begin{split}
|W|^{\alpha}&=M^{\alpha}|H+G|^{\alpha} \\
&\geq M^{\alpha}  \min\{1,\; (\Lambda+k)^{\alpha}\}   (\frac{x_{k+1}}{\varepsilon})^{\mu\alpha},\ \  \forall x\in \Om  .
\end{split}
\end{equation}
Recalling that $d_x=dist(x, \partial \Om)\leq x_{k+1}$ and  $\beta-(n+1)\geq0$, we have
\begin{equation}\label{3.37}
d_{x}^{n+1-\beta}\geq x_{k+1}^{n+1-\beta}=\varepsilon^{n+1-\beta}(\frac{x_{k+1}}{\varepsilon})^{n+1-\beta}, \ \  \forall x\in \Om .
\end{equation}
Finally, we use the construction condition
(1.4) and apply (3.34), (3.36) and (3.37) to obtain that
\begin{equation*}
\begin{split}
& [F(x,  W, \nabla  W)]^{-1}\geq \frac{1}{A }d_{x}^{n+1-\beta}| W|^{\alpha}\cdot(1+|\nabla W|^{2})^{\frac{-\gamma}{2}}\\
& \geq  \frac{1}{A }\varepsilon^{n+1-\beta}(\frac{x_{k+1}}{\varepsilon})^{n+1-\beta}\cdot M^{\alpha}    \min\{1,\; (\Lambda+k)^{\alpha}\}
 (\frac{x_{k+1}}{\varepsilon})^{\mu\alpha}\cdot M^{-\gamma}(\frac{1}{\varepsilon})^{-\gamma}\tau_{3}(\varepsilon) (\frac{x_{k+1}}{\varepsilon})^{\gamma-\mu\gamma}\\
&=A^{-1}M^{\alpha-\gamma}\varepsilon^{n+1-\beta+\gamma}\tau_{3}(\varepsilon)   \min\{1,\; (\Lambda+k)^{\alpha}\} (\frac{x_{k+1}}{\varepsilon})^{n+1-\beta+\gamma+\mu\alpha-\mu\gamma},
\end{split}
\end{equation*} which, together with   (3.27), yields
\begin{equation}\label{3.8}
\begin{split}
& F[W]:= \det D^2 W \cdot [F(x,  W, \nabla  W)]^{-1}    \\
&\ \ \geq M^n\Lambda^{k+1-n}(\frac{1}{\varepsilon})^{2}  \tau_{1}(\varepsilon)(\frac{x_{k+1}}{\varepsilon})^{n\mu-\bar a -2}\\
&\ \ \  \cdot A^{-1}M^{\alpha-\gamma}\varepsilon^{n+1-\beta+\gamma}\tau_{3}(\varepsilon)  \min\{1,\; (\Lambda+k)^{\alpha}\} (\frac{x_{k+1}}{\varepsilon})^{n+1-\beta+\gamma+\mu\alpha-\mu\gamma}\\
&\ \  = A^{-1}M^{n+\alpha-\gamma}\varepsilon^{n-1-\beta+\gamma}  \Lambda^{k+1-n}  \min\{1,\; (\Lambda+k)^{\alpha}\}   \tau_{1}(\varepsilon)\tau_{3}(\varepsilon)\\
 & \ \ \  \cdot (\frac{x_{k+1}}{\varepsilon})^{\mu(n+\alpha-\gamma)-\bar a-\beta +n-1+\gamma}\\
 &=A^{-1} M^{n+\alpha-\gamma}\cdot\varepsilon^{n-1-\beta+\gamma}  \Lambda^{k+1-n}  \min\{1,\; (\Lambda+k)^{\alpha}\}  \tau_{1}(\varepsilon)\tau_{3}(\varepsilon),
 \end{split}
\end{equation}
where we have used (1.7) for the last equality.
However,   by (3.21) and (3.35) one can find a positive constant   $\varepsilon=\varepsilon_{0}<\min\{\eta_{1},..., \eta_{n-1},  d, 1,  \varepsilon_1 \}$ and $\delta_0=C(\varepsilon_{0})\>0$ such that
\begin{equation*}
A^{-1} \varepsilon_0^{n-1-\beta+\gamma}  \Lambda^{k+1-n}  \min\{1,\; (\Lambda+k)^{\alpha}\}  \tau_{1}(\varepsilon_0)\tau_{3}(\varepsilon_0)
 >\delta_0, \ \ \forall x\in \Om .
\end{equation*}
Since (1,4) means that $n+\alpha-\gamma>0$,  we can take $M=M_{0}=C(\delta_{0}) $ such that
\begin{equation*}
A^{-1}M_{0}^{n+\alpha-\gamma}\cdot\varepsilon_0^{n-1-\beta+\gamma}  \Lambda^{k+1-n}   \min\{1,\; (\Lambda+k)^{\alpha}\}  \tau_{1}(\varepsilon_0)\tau_{3}(\varepsilon_0)>1, \ \ \forall x\in \Om ,
  \end{equation*}
which together with (3.38) implies
$$F[ W ] = [F(x, W, \nabla  W)]^{-1}\det D^{2} W> 1, \ \ \ \forall x\in\Omega.$$
 This proves the desired (3.22), and the proof of Step 3 is completed.
\vskip 0.5cm

{\bf Step 4.} \; {\sl Finish the proof of (3.1)}.

Now since $u\in C(\overline \Om) $ is convex with $u=0$ on $\partial \Om$,  then $u\leq 0$ in $\Om$.  Furthermore, since $u$ is a viscosity solution to
problem (1.2), by (3.22) and Lemma 2.1 we have $ W\leq u$   on $\bar \Omega$. This implies  $ |u|\leq | W|$   on $\bar \Omega$.
In particular, $\forall y=(0,...,0, y_{k+1},0, \cdots, 0 )\in\Omega$, we have
 $$
|u(0,.., 0, y_{k+1}  , 0, \cdots, 0)|\leq |W(0,.., 0, y_{k+1},0, \cdots, 0)|
=M_{0}(k+\Lambda)(\frac{1}{\varepsilon_{0}})^{\mu}\cdot y_{k+1}^{\mu}.$$
 In this way, we have proved (3.1) and so Theorem 1.1.

\vskip15pt

{\bf Proof of Corollary 1.2.}\;\;
  By lemma 2.3, it is sufficient to prove
\begin{equation}\label {3.39}
\begin{split}
  |u(y)|\leq C{d_{y}}^{\mu}, \ \  \forall y\in \Om
\end{split}
\end{equation}
For each  $y\in \Om$ we can find a $x_0\in \partial \Om$ such that $d_{y}=|y-x_0|$.
However, $x_0$ satisfies the $k$-strictly convex condition, because of the $k$-strictly convex assumption of the $om$.
Hence, (3.39)   follows directly from Theorem 1.1.

 \vskip15pt

{\bf Proof of Corollary 1.3.}\;\;
  This was proved for the classical solutions $u\in C^2(\Om)\cap C(\bar \Om)$ in Proposition 3.1 in \cite{[LL]}.
    Its argument is easily extended to
  the case of viscous solutions.

 Consider the function
 \begin{equation}\label{3.40}
  W(x):=-M(x_n)^{\mu_0}(N^2-\sum_{i=1}^{n-1}x_i^2), \ \ x=(x' , x_n)\in \Om.
\end{equation}
 Under the assumption of Corollary 1.3, it follows from the proof of Proposition 3.1 in \cite{[LL]} (or Theorem 1.1 in \cite{[JLT]}
 for the special case $\gamma=0$)
 that the $W$ satisfies (3.22)  for some positive constant    $N=C(n, d)$
 and $M=C(A,\alpha, \beta, n, \gamma, d)$.
   Repeating the arguments  of Step 4 in the proof  of Theorem 1.1 and the proof of Corollary 1.2, one can obtain the desired result
 of Corollary 1.3.

\newpage


\begin{thebibliography}{999}

\parskip2.5pt



\bibitem {[CNS]}  Caffarelli, L.A., Nirenberg, L., Spruck, J.,
               The Dirichlet problem for nonlinear second-order elliptic equations I,
               Monge-Amp\`ere equation,  Comm. Pure Appl. Math. 37 (1984), 369--402.



\bibitem {[Ca2]} Calabi, E.,
              Complete affine hypersurfaces I,
              Symposia Mathematica  10 (1972), 19-38.

\bibitem {[Cr]}  Grandall M. G., viscous solutions: Primer, in "Viscosity Solutions and Applications", 1-43, Lecture Notes in Math., 160, Fond. CIME/CIME Found. Subser.,
    Springer, Berlin, 1997.

 \bibitem {[CY]} Cheng, S.Y.,  Yau, S.T.,
              On the regularity of the Monge-Amp\`ere equation
              $\det\frac{\p^2 u}{\p x_i\p x_j} =F(x,u)$,
              Comm. Pure Appl. Math. 30 (1977), 41--68.


\bibitem {[CY2]} Cheng, S.Y.,  Yau, S.T.,
              Complete affine hypersurfaces I,
              The completeness of affine metrics,
              Comm. Pure Appl. Math. 39 (1986), 839-866.

\bibitem {[CW]} Chou, K.S.,   Wang, X.-J.,
              The $L\sb p$-Minkowski problem and the Minkowski problem
              in centroaffine geometry,
              Adv. Math. 205 (2006), 33--83.

\bibitem {[F]} Figalli, A., The  Monge-Amp\`ere equation anf its Applications, European Math Soc Publ House,
                CH-8092 Zurich, Switzerland, 2017.

\bibitem {[G]} Guti$\acute{e}$rrez, C. E.,   The  Monge-Amp\`ere equation, Birkhauser, Boston, 2001.

\bibitem {[JL]} Jian, H.Y.,  Li, Y., Optimal boundary regularity  for a  Singular Monge-Amp\`ere equation,   J. Differential Equations, 264 (2018), 6873-6890.

\bibitem {[JL1]} Jian, H.Y.,  Li, Y.,  A Singular Monge-Amp\`ere equation on unbounded domains, Science China Math. 61(8)(2018),1473-1480.

\bibitem {[JLL]} Jian, H.Y.,  Liu, Y. N., Li, Y., The boundary estimate for singular-degenerate Monge-Amp\`ere type Equations on anisotropic convexity domains, December,   2022.


\bibitem {[JLT]} Jian, H.Y., Li, Y., Tu, X. S.,   On a class of degenerate and singular Monge-Amp\`ere equations, Methods and  Applications of Analysis, 28(3)(2021), 371-386.

 \bibitem {[JLW]} Jian, H.Y., Lu, J.,  Wang, X. J., A priori estimates and existence of solutions to the prescribed centroaffine curvature problem,
 Journal of Functional Analysis, 274(2018), 826-962.

\bibitem {[JLZ]}Jian, H.Y., Lu, J.,  Zhang, G., Mirror symmetric solutions to the cetro-affine Minkowski problem,
                Calc. Var. Partial Differential Equations, (2016), 55:41.




\bibitem {[JW]} Jian, H.Y.,  Wang, X.-J.,
              Bernstein theorem and regularity for a class of  Monge-Amp\`ere equation,
              J. Diff. Geom. 38 (1993), 131-150.


\bibitem {[Le]} Le, N. Q., Optimal boundary regularity for some singular  Monge-Amp\`ere equatios on bounded convex domains, Discrete and continuous Dynamical Systems, 42 (5) (2022),
2199-2214.
\bibitem {[LS]} Le, N. O., Savin, O., Schauder estimates for degenerate Monge-Amp\`ere equations and smoothness of the eigenfunctions,
              Invent. Math. 207 (2017), 389--423.

\bibitem {[LL]} Li M N, Li Y. Global regularity for a class of Monge-Amp\`ere type equations. Sci China Math, 2020,63,
                 65(3)(2022), 501-516.

\bibitem {[LN]}  Loewner, C.,  Nirenberg, L.,
               Partial differential equations invariant
               under conformal or projective transformations,
               In Contributions to Analysis, pages 245-272,
               Academic Press, 1974.

\bibitem {[Lut]} Lutwak, E.,
              The Brunn-Minkowski-Firey theory I,
              Mixed volumes and the Minkowski problem,  J. Diff. Geom. 93 (2013),431-469.

   \bibitem {[M]}    Minguzzi,E., Affine sphere relativity, Commu. Math. Phys. 350 (217), 749-801.

\bibitem {[Sa]} Savin, O., Pointwise $C^{2,\alpha}$ estimates at the boundary for the Monge-Amp\`ere equation,
           J. Amer. Math. Soc., 26(1), 63-99 (2013).


\bibitem {[TW]} Trudinger, N.S.,  Wang, X.-J.,
              Boundary regularity for the Monge-Amp\`ere and
              affine maximal surface equations,
              Ann. Math. (2) 167 (2008),  993--1028.





\bibitem {[TU]} Trudinger N.S.,  Urbas J. I.E., The Dirichlet problem for the equation of prescribed Gauss curvature,
  Bull. Austral. Math. Soc. 28(1983), 217-231.
\bibitem {[U1]}   Urbas, J.I.E.,
              Global H\"older estimates for equations of Monge-Amp\`ere type,
              Invent. Math. 91 (1988),  1-29.

\end{thebibliography}
\end{document}